\newcommand{\bN}{\mathbb{N}} 
\newcommand{\bZ}{\mathbb{Z}}
\newcommand{\bB}{\mathbb{B}} 
\newcommand{\bH}{\mathbb{H}} 
\newcommand{\cC}{\mathcal{C}} 
\newcommand{\cF}{\mathcal{F}} 
\newcommand{\cG}{\mathcal{G}} 
\newcommand{\cH}{\mathcal{H}} 
\newcommand{\cK}{\mathcal{K}}
\newcommand{\cS}{\mathcal{S}}
\newcommand{\ra}{\rightarrow}
\newcommand{\gr}[1]{| {#1}|}
\newcommand{\cov}[1]{\text{Cov}({#1})}
\newcommand{\dirgr}{\text{{\bf DirGr}}}
\newcommand{\bdirgr}{\text{{\bf DirGr}}_b}
\newcommand{\coeff}{\text{Coeff}}
\newcommand{\pg}{P(\Gamma)}
\newcommand{\hh}{H\!H}
\newcommand{\AAA}{A\!\!-\!\! A}
\newcommand{\natr}{\stackrel{\bullet}{\longrightarrow}}
\newcommand{\grmod}{\text{{\bf GrMod}}}
\newcommand{\rmod}{\text{{\bf Mod}}}
\newcommand{\ralg}{\text{{\bf Alg}}}
\newcommand{\posets}{\text{{\bf Posets}}}
\title{The homology of digraphs as a generalisation of Hochschild homology}
\author{Paul Turner\thanks{The first author was partially supported by SNF project no. 200020-121506/1}  and Emmanuel Wagner  }
\institute{{\sc Paul Turner:} D\'epartement de math\'ematiques, 
Universit\'e de Fribourg, CH-1700 Fribourg, Switzerland.
\email{prt.maths@gmail.com}.
\hspace{1em}{\sc Emmanuel Wagner:} 
Institut de Math\'ematiques de Bourgogne, Universit\'e de Bourgogne, UMR 5584 du CNRS, BP47870, 21078 Dijon Cedex, France. \email{emmanuel.wagner@u-bourgogne.fr}.  
}
\titlerunning{Homology of digraphs}
\authorrunning{Paul Turner and Emmanuel Wagner}
\begin{document}

\maketitle


\begin{abstract}
J. Przytycki has established a connection between the Hochschild homology of an algebra $A$ and the chromatic graph homology of a polygon graph with coefficients in $A$. In general the chromatic graph homology is not defined in the case where the coefficient ring is a non-commutative algebra. In this paper we define a new homology theory for directed graphs which takes coefficients in an arbitrary $\AAA$ bimodule, for $A$ possibly non-commutative, which on polygons agrees with Hochschild homology through a range of dimensions.
\end{abstract}


\section*{Introduction}
This paper addresses a question of J. Przytycki. When defining the
Hochschild homology of an algebra with coefficients in a bimodule, the differential displays a
certain cyclical feature which makes it sometimes convenient to write the
tensor factors of $n$-chain generators, not linearly, but instead as
the vertices of an $(n+1)$-sided polygon. Przytycki
has a beautiful interpretation of this appearance of $n$-gons in terms of the 
chromatic homology of graphs developed by  of Helme-Guizon and Rong (\cite{HeGuRo}): a variant of this theory constructed using an algebra $A$ and an $\AAA$-bimodule $M$ applied to the $n$-gon  
is the Hochschild homology of $A$ with coefficients in $M$ through a range of dimensions
increasing with $n$. In general chromatic homology of a graph is only defined when the algebra is 
commutative, but in the case of the $n$-gon or a line graph, the
non-commutative case also makes sense. In speculating about possible generalisations
of Hochschild homology in \cite{Pr} Przytycki writes that he believes
``graph homology is the proper generalization of Hochschild homology:
from a polygon to any graph''. The remaining problem being, however,
that one does not know in general how to define chromatic graph homology
involving non-commutative algebras. To paraphrase, Przytycki's question
is: 
\begin{quotation}
{\em Given a (possibly non-commutative) algebra $A$ and an $\AAA$ bimodule $M$, can one construct
a functor from some category of graphs to graded modules such that on $n$-gons this functor agrees with  Hochschild homology
through a range of dimensions ?}
\end{quotation}

In this paper we consider finite based directed graphs, that is
digraphs whose vertex set is finite and for which there is a
distinguished vertex, the {\em base vertex}. Given a triple $(\Gamma,
A, M)$ consisting of a based digraph $\Gamma$, an $R$-algebra $A$ and
an $\AAA$ bimodule $M$ our main purpose is to define homology groups
$\cH_*(\Gamma, A, M)$ with nice functorial properties. 
There are two key ideas necessary to embrace
non-commutative algebras: firstly one must give up on having ``cube''
(as seen in the construction of chromatic graph homology) and secondly one
needs directed edges in order to be able to multiply non-commuting
elements (the head and tail providing information on which element
comes first).  The base vertex is necessitated by the appearance of the
bimodule $M$. In order to obtain a functor it is essential that we use the approach initiated in \cite{EvTu} relating Khovanov-type homology to the homology of posets with coefficients in a presheaf.

The construction goes roughly as follows. We
replace a digraph $\Gamma$ by its poset of directed multipaths $\pg$
and then define the homology groups $H_*(\Gamma, \cF)$ as the homology of the poset $\pg$ with
coefficients in an arbitrary coefficient system (pre-sheaf) $\cF$. Armed with these generalities we 
construct from a pair $(A,M)$ where $A$ is an algebra and $M$ an $\AAA$ bimodule, a particular coefficient system
$\cF_{A,M}$. This coefficient system generalises the
construction of the chromatic homology of graphs. The homology groups we are after are then defined by
$$
\cH_*(\Gamma, A,M) = H_*(\Gamma, \cF_{A,M}).
$$

%
%

These homology groups satisfy some nice properties and give one
possible answer to Przytycki's question. Let $\bdirgr$ denote the
category of finite based digraphs with base vertex preserving
inclusions of digraphs as morphisms and let $\grmod$ denote the
category of $\bZ$-graded $R$-modules. We then have:

\vspace{5mm}

\noindent
{\bf Theorem \ref{thm:main}} {\em Let $A$ be an $R$-algebra  and $M$ an $\AAA$ bimodule. Then
$$
\cH_*(- ,A,M) \colon \bdirgr \ra \grmod 
$$
is a functor with the property that if  $\Gamma$ is a consistently directed $n$-gon then for $0\leq i \leq n-2$ 
$$
\cH_i(\Gamma, A, M ) \cong \hh_i (A;M)
$$
where on the right hand side we have the Hochschild homology of the algebra $A$ with coefficients in the bimodule $M$.
}

\vspace{5mm}

An interesting special case arises when $M=A$. In this case unbased digraphs suffice and we write
$\cH_*(\Gamma, A) = \cH_*(\Gamma, A,A)$. This is functorial in both variables:

\vspace{5mm}

\noindent
{\bf Theorem \ref{thm:restricted}}{\em  
$
\;\; \cH_*(- , -) \colon \dirgr \times \ralg  \ra \grmod 
$
is a  bifunctor.  
}
 \vspace{5mm}

By the previous theorem this bifunctor has the property  if  $\Gamma$ is a consistently directed $n$-gon then  for $0\leq i \leq n-2$,
$$
\cH_i(\Gamma ,A ) \cong \hh_i (A).
$$


\section{The homology of directed graphs}

In this section we will be dealing with the category $\dirgr$ whose
objects are finite directed graphs (unbased) and whose morphisms are
inclusions of directed graphs. Our interest will be to construct
homology of directed graphs for an arbitrary coefficient system.
Note that a
directed graph $\Gamma$ comes with {\em tail} and {\em head }
functions 
$$ t,h\colon Edge(\Gamma) \ra Vert(\Gamma) $$ 
taking a
directed edge $e$ to its tail and its head respectively.

We will assume familiarity with the basics of posets. As usual we will
denote a partial ordering by $\leq$ with $x<y$ meaning $x\leq y$ and
$x\neq y$. We recall that an element $y$ is said to {\em cover}
another element $x$ if $x<y$ and there is no $z$ such that $x<z<y$. In
such a circumstance we write $x\prec y$.  The {\em Hasse diagram} of a
poset $P$ is the directed graph with one vertex for each element of
$P$ and an oriented arc from $x$ to $y$ if and only if $x\prec y$. The
{\em Boolean lattice} on a set is the poset of subsets partially
ordered by inclusion and its Hasse diagram is a hypercube. A poset may
be regarded as a category with one object for each element and a
unique morphism from $x$ to $y$ whenever $x\leq y$. Such morphisms
compose in the obvious way.

Let $\Gamma$ be a finite digraph and let $\bB (\Gamma)$ be the
Boolean lattice on its edge set $Edge(\Gamma)$.

\begin{definition}
A {\em simple path} in $\Gamma$ is a sequence of edges $e_1, \ldots
, e_n$ such that $h(e_i) = t(e_{i+1})$ and no vertex is encountered
twice. A {\em multipath} in $\Gamma$ is a collection of disjoint
simple paths.
\end{definition}

Let $P(\Gamma)$ be the subposet of $\bB(\Gamma)$ consisting of
multipaths in $\Gamma$. We will refer to it as the {\em path poset} of
$\Gamma$. By convention
there is one empty path $\emptyset$ and this is a (global) minimal
element for $\pg$ which we will denote by 0. There may be several
(local) maxima. As mentioned in  the previous paragraph we may choose to  view $\pg$ as a
category with a unique morphism between any two related elements. 

\begin{example}
In Figure \ref{fig:poset} we see a digraph $\Gamma$ along with an illustration of its path poset. 
\end{example}

\begin{figure}
\begin{center}
\includegraphics[angle=-90, scale = 0.5]{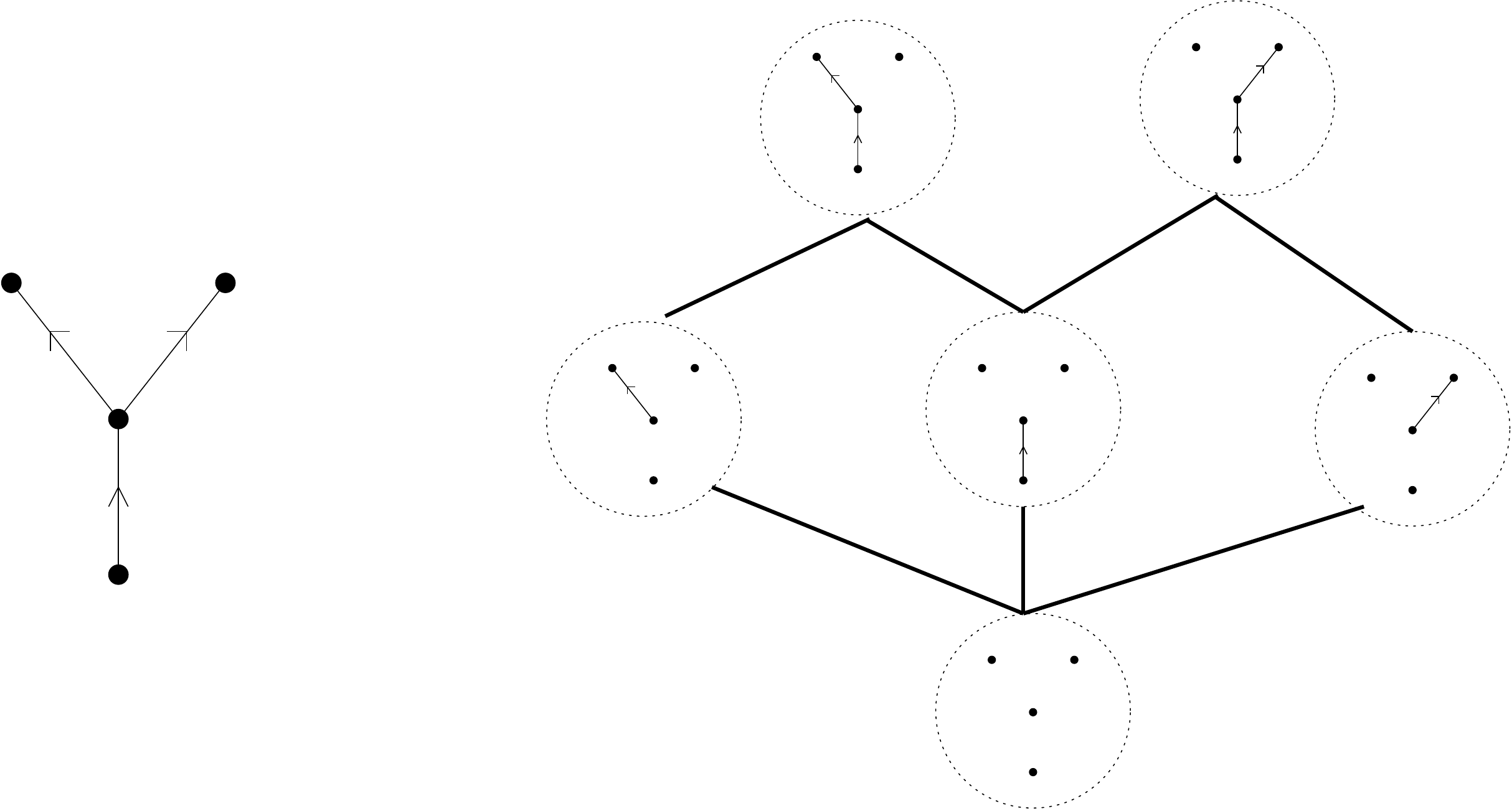}
\end{center}
\caption{A digraph and its path poset}\label{fig:poset}
\end{figure}

The assignment of a digraph to its path poset gives a covariant
functor $$ P(-) \colon \dirgr \ra \posets .  $$ 
Given an inclusion
$f\colon \Gamma^\prime \ra \Gamma$ we write $\tilde {f}\colon
P(\Gamma^\prime) \ra P(\Gamma)$ for $P(f)$. Note that this is an
injective map  of posets with the property that if $x\prec y$ in
$P(\Gamma^\prime)$ then $\tilde {f}(x)\prec \tilde {f}(y)$ in $P(\Gamma)$.

We note for later use the following lemma concerning such path posets.

\begin{lemma}\label{lem:diamond}$~$\\\vspace*{-5mm}
\begin{enumerate}
\item[(i)] For $x \in \pg$, the interval $[0,x]=\{y\in \pg \mid y\leq x\}$  is a Boolean lattice of rank $\gr x$.
\item[(ii)] If $x\prec y \prec z$ in $\pg$ then there exists a unique $y^\prime\neq y$ such that $x\prec y^\prime \prec z$.
\end{enumerate}
\end{lemma}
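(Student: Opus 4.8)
The plan is to prove both parts by exploiting the fact that a multipath is a collection of disjoint simple paths, so its subsets within $\pg$ are controlled combinatorially by which edges one retains. For part (i), let $x\in\pg$ be a multipath consisting of edges $e_1,\dots,e_k$ (so $\gr x = k$). I claim the interval $[0,x]$ coincides with the full Boolean lattice on $\{e_1,\dots,e_k\}$, i.e. \emph{every} subset of the edge set of $x$ is again a multipath. This is the crux: one must check that deleting edges from a disjoint union of simple paths again yields a disjoint union of simple paths. Indeed, deleting an edge from a simple path $e_{i_1},\dots,e_{i_m}$ breaks it into two shorter simple paths (one or both possibly empty), which are automatically vertex-disjoint from each other and from the remaining components since the original components were disjoint and no vertex repeated; iterating, any sub-collection of the edges of $x$ forms a multipath. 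Conversely any $y\le x$ in $\pg$ is in particular a subset of the edge set of $x$. Hence $[0,x]$ is exactly $\bB(\{e_1,\dots,e_k\})$, a Boolean lattice of rank $k=\gr x$, and the covering relations match because the partial order is inclusion in both. This establishes (i).

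For part (ii), suppose $x\prec y\prec z$ in $\pg$. Since covering in $\bB(\Gamma)$ corresponds to adding a single edge, and $\pg$ is a subposet of $\bB(\Gamma)$ with the same order, $x\prec y$ forces $|y|=|x|+1$ and $y\prec z$ forces $|z|=|y|+1$, so $z$ is obtained from $x$ by adjoining two distinct edges, say $z = x\cup\{a,b\}$ with $y = x\cup\{a\}$. Set $y' = x\cup\{b\}$; then $y'\neq y$ and, in $\bB(\Gamma)$, $x\prec y'\prec z$. It remains to show $y'\in\pg$, i.e. that $x\cup\{b\}$ is a multipath. But $x\cup\{b\}\subseteq z$ and $z\in\pg$, so by part (i) every subset of (the edge set of) $z$ is a multipath; in particular $y'$ is. Uniqueness is immediate: in the Boolean lattice $\bB(\Gamma)$ the only elements strictly between $x$ and $z$ when $|z|=|x|+2$ are $x\cup\{a\}$ and $x\cup\{b\}$, so there is no room for a third, and any $y'$ as in the statement must be one of these two, hence the unique one different from $y$.

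I do not anticipate a serious obstacle; the only point requiring genuine (if short) argument is the closure claim in (i) — that subsets of multipaths are multipaths — and this is a direct consequence of the definition of a simple path (no repeated vertices) together with the disjointness of the components. Everything else is a formal transfer of the structure of the Boolean lattice $\bB(\Gamma)$ to the subposet $\pg$, using that $\pg$ carries the induced order and that, by (i), $\pg$ is \emph{closed under passing to subsets of any of its elements}. One small thing to be careful about is the degenerate cases in the path-splitting argument (deleting a terminal edge, or an edge from a length-one path), but these produce empty paths which are harmless; and the edge counts in (ii) rely only on the general fact that covers in a Boolean lattice add exactly one element, which passes to any subposet carrying the induced order as long as the relevant intermediate elements lie in the subposet — which is precisely what (i) guarantees.
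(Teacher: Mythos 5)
Your proof is correct and takes essentially the same route as the paper: part (i) rests on the observation that every subset of the edges of a multipath is again a multipath, and part (ii) is deduced by reducing to the Boolean lattice structure of $[0,z]$ that (i) provides. Your write-up simply makes explicit what the paper leaves implicit, namely that covering relations in $\pg$ add exactly one edge and that the only candidates between $x$ and $z$ are $x\cup\{a\}$ and $x\cup\{b\}$.
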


\begin{proof}
For (i) we simply note that given a multipath then each subset of its
edges is again a multipath. For (ii), the stated property is true for
Boolean lattices and so the result follows by considering the interval
$[0,z]$. \qed
\end{proof}

In general the homology of posets only becomes interesting (especially in the presence of a global minimum) if one allows local systems of coefficients.
Let $R$ be a commutative ring and let $\rmod$ be the category of $R$-modules. 

\begin{definition}
A {\em coefficient system} for a digraph $\Gamma$ consists of a covariant functor $\pg \ra \rmod$. These form the objects of a category $\coeff (\Gamma)$ whose morphisms are natural transformations of functors.
\end{definition}

One can take the homology of any small category with coefficients in a
functor and we now recall this construction restricted to our
particular setting i.e. where the category is a path category of a
digraph $\Gamma$ and the functor $\cF\colon P(\Gamma) \ra \rmod$ is a
coefficient system.  We will define a chain complex $\cS_*(\Gamma,
\cF)$ whose homology, by definition, is the homology of $\Gamma$
with coefficients in $\cF$.

We set
$$
\cS_k(\Gamma ; \cF) = \bigoplus_{x_0x_1\ldots x_k} \cF(x_0)
$$
where the sum is over all sequences $x_0 \leq x_1 \leq \cdots \leq x_k$ in $\pg$ of length $k+1$. A typical element is therefore a sum of elements of the form $\lambda x_0x_1\ldots x_k$ where $\lambda\in \cF(x_0)$. To turn this into a complex we define $d\colon \cS_k(\Gamma ; \cF) \ra \cS_{k-1}(\Gamma ; \cF) $ by 
$$
d(\lambda x_0x_1\ldots x_k) = \cF(x_0\leq x_1)(\lambda)x_1\ldots x_k 
+ \sum_{i=1}^k (-1)^i\lambda x_0 \ldots \hat{x_i}\ldots x_k.
$$ 
It is a standard fact (easily checked) that $d^2=0$ and so
$(\cS_*( \Gamma ; \cF), d)$ is a chain complex. We are now free to take
homology and we define the {\em homology of the directed graph}
$\Gamma$ {\em with coefficients in } $\cF$ to be the graded $R$-module
$$
H_*(\Gamma ;\cF) = H(\cS_*(\Gamma ; \cF), d).
$$

Homology has nice functorial properties as we see in the next proposition.

\begin{proposition}\label{prop:nat}$~$\\\vspace*{-5mm}
\begin{enumerate}
\item[(i)] Let $\Gamma$ be a finite directed graph. Then
$$
H_*(\Gamma; -)\colon \coeff (\Gamma)\ra \rmod
$$
is a covariant functor.
\item [(ii)] Let $f\colon \Gamma^\prime \ra \Gamma$ be an inclusion and let $\cF\colon P(\Gamma) \ra \rmod$ be a coefficient system for $\Gamma$. 
Then there is an induced homomorphism
$$ f_* \colon H_*(\Gamma^\prime ; \cF \circ \tilde {f}) \ra H_*(\Gamma ; \cF). $$
Such induced homomorphisms are well behaved under
composition of inclusions.
\end{enumerate}
\end{proposition}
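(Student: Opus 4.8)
The plan is to prove both parts by exhibiting the relevant maps at the chain level and checking that they commute with the differential $d$; passage to homology is then automatic, and the functoriality axioms reduce to identities that hold already on chains.

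For part (i), a natural transformation $\eta\colon \cF \Rightarrow \cG$ of coefficient systems consists of $R$-module maps $\eta_x\colon \cF(x)\ra\cG(x)$ for each $x\in\pg$ commuting with the structure maps $\cF(x\leq y)$, $\cG(x\leq y)$. I would define $\eta_\#\colon \cS_k(\Gamma;\cF)\ra\cS_k(\Gamma;\cG)$ by $\eta_\#(\lambda x_0x_1\ldots x_k) = \eta_{x_0}(\lambda)\, x_0x_1\ldots x_k$, extended $R$-linearly over the direct sum. The verification that $d\circ\eta_\# = \eta_\#\circ d$ splits into two pieces matching the two terms of $d$: on the first term one needs $\eta_{x_1}\bigl(\cF(x_0\leq x_1)(\lambda)\bigr) = \cG(x_0\leq x_1)\bigl(\eta_{x_0}(\lambda)\bigr)$, which is exactly the naturality square for $\eta$; on the remaining terms $\sum_i (-1)^i\lambda\, x_0\ldots\hat{x_i}\ldots x_k$ the face maps do not touch the coefficient $\lambda\in\cF(x_0)$ (note $x_0$ is never deleted), so the identity is immediate. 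Hence $\eta_\#$ is a chain map and induces $H_*(\Gamma;\eta)\colon H_*(\Gamma;\cF)\ra H_*(\Gamma;\cG)$. That $(\mathrm{id}_\cF)_\# = \mathrm{id}$ and $(\eta'\circ\eta)_\# = \eta'_\#\circ\eta_\#$ is clear from the formula, so $H_*(\Gamma;-)$ is a functor.

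For part (ii), recall from the excerpt that $\tilde f\colon P(\Gamma^\prime)\ra P(\Gamma)$ is an injective poset map preserving the order (and even the covering relation). Given a sequence $x_0\leq x_1\leq\cdots\leq x_k$ in $P(\Gamma^\prime)$, applying $\tilde f$ yields a sequence $\tilde f(x_0)\leq\cdots\leq\tilde f(x_k)$ in $P(\Gamma)$ of the same length, and $(\cF\circ\tilde f)(x_0) = \cF(\tilde f(x_0))$, so there is an evident $R$-module map
$$
f_\#\colon \cS_k(\Gamma^\prime;\cF\circ\tilde f)\ra\cS_k(\Gamma;\cF),
\qquad
\lambda\, x_0x_1\ldots x_k \mapsto \lambda\, \tilde f(x_0)\tilde f(x_1)\ldots \tilde f(x_k).
$$
Because $\tilde f$ respects $\leq$ and because $(\cF\circ\tilde f)(x_0\leq x_1) = \cF(\tilde f(x_0)\leq\tilde f(x_1))$ by definition of the composite functor, this commutes with both terms of $d$ term by term, so $f_\#$ is a chain map and induces $f_*$ on homology. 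For the behaviour under composition, if $g\colon\Gamma^\dprime\ra\Gamma^\prime$ is another inclusion then $\widetilde{f\circ g} = \tilde f\circ\tilde g$ by functoriality of $P(-)$, and moreover $\cF\circ\widetilde{f\circ g} = (\cF\circ\tilde f)\circ\tilde g$, so $(f\circ g)_\# = f_\#\circ g_\#$ on the nose; likewise $(\mathrm{id}_\Gamma)_\# = \mathrm{id}$. Passing to homology gives the stated compatibility.

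I do not expect a genuine obstacle here: the only point requiring any care is bookkeeping the domain of the induced map in (ii), namely that the source complex must be taken with coefficients in the \emph{pulled-back} system $\cF\circ\tilde f$ rather than in some coefficient system on $\Gamma^\prime$ chosen independently — this is what makes $f_\#$ well defined without needing any compatibility hypothesis. One should also remark that (i) and (ii) interact correctly: a natural transformation $\eta\colon\cF\Rightarrow\cG$ on $\Gamma$ restricts to $\eta\tilde f\colon\cF\tilde f\Rightarrow\cG\tilde f$ on $\Gamma^\prime$, and the square relating $f_*$, $\eta_*$ is commutative already at chain level, since both composites send $\lambda\,x_0\ldots x_k$ to $\eta_{\tilde f(x_0)}(\lambda)\,\tilde f(x_0)\ldots\tilde f(x_k)$; this naturality will be what is actually used later when assembling $\cH_*(-,A,M)$ into a functor on $\bdirgr$.
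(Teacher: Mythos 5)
Your proposal is correct and follows essentially the same route as the paper: the same chain-level maps $\lambda\,x_0\ldots x_k \mapsto \eta_{x_0}(\lambda)\,x_0\ldots x_k$ for (i) and $\lambda\,x_0\ldots x_k\mapsto \lambda\,\tilde f(x_0)\ldots\tilde f(x_k)$ for (ii), with the chain-map check reducing to naturality (resp.\ to $\cF\circ\tilde f(x\leq y)=\cF(\tilde f(x)\leq\tilde f(y))$) and functoriality verified on the nose at chain level. Your closing remark about the compatibility square relating $f_*$ and $\eta_*$ is exactly the observation the paper records immediately after its proof, and it is indeed what gets used later in the proof of Theorem \ref{thm:main}.
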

 
\begin{proof}
(i) Let $\cF$ and $\cG$ be coefficient systems and let $\tau$ be a
morphism from $\cF$ to $\cG$. That is, $\cF$ and $\cG$ are functors
$P(\Gamma) \ra \rmod$ and $\tau$ is a natural transformation $\cF \natr \cG$ consisting
of a map $\tau_x\colon \cF(x) \ra \cG(x)$ for each $x\in \pg$
satisfying the usual naturality requirements. We now define a
homomorphism
$$
\tau^\prime \colon \cS_k(\Gamma ; \cF) \ra \cS_k(\Gamma ; \cG)
$$
by setting
$$
\tau^\prime (\lambda x_0 \ldots x_k) = \tau_{x_0}(\lambda) x_0 \ldots x_k.
$$
The naturality of $\tau$ guarantees that this is a chain map and thus induces
$$
\tau_*\colon H_*(\Gamma ; \cF) \ra H_*(\Gamma ; \cG)
$$
as required. 

Furthermore, given another natural transformation $\sigma\colon \cG \natr \cK$ one has $(\sigma\tau)^\prime = \sigma^\prime \circ \tau^\prime$ from which it follows that $(\sigma\tau)_* = \sigma_* \circ \tau_*$.

(ii) Recalling that $\tilde{f}$ is the induced map on path posets, there is a homomorphism
$$
f^\prime\colon \cS_k(\Gamma^\prime ; \cF \circ \tilde {f}) \ra \cS_k(\Gamma ; \cF)
$$
defined by
$$
f^\prime(\lambda x_0 \ldots x_k) = \lambda \tilde {f} (x_0) \ldots \tilde {f}(x_k).
$$
This is a chain map since, by definition, $\cF \circ \tilde {f}(x\leq y) = \cF (\tilde {f}(x) \leq \tilde{f}(y))$. In homology this defines $f_*$. 

Given $g\colon \Gamma^{\prime \prime}\ra \Gamma^\prime$ we have $\widetilde{fg}= \tilde{f} \circ \tilde{g}$ from which it follows immediately that $(fg)_*=f_*\circ g_*$.
\qed
\end{proof}

Similar calculations to those in the above proof show that the maps $f_*$ are natural with respect to morphisms of coefficient systems. Spelt out more clearly this means the following.
Let $f\colon \Gamma^\prime \ra \Gamma$ be an inclusion of finite
 digraphs and let $\cF_1, \cF_2 \colon P(\Gamma) \ra \rmod$ be
 coefficient systems. Given a natural transformation $\tau\colon \cF_1\natr \cF_2$, define a natural transformation $\tilde{\tau}$ from
 $\cF_1\circ \tilde{f}$ to $\cF_2\circ \tilde{f}$ by $ \tilde{\tau}_x
 = \tau_{\tilde{f}(x)}$. Under such circumstances the following diagram commutes.

\[\xymatrix{
H_*(\Gamma^\prime ; \cF_1 \circ \tilde {f}) \ar[d]_{\tilde{\tau}_*}\ar[rr]^{f_{1*}} & & H_*(\Gamma ; \cF_1) \ar[d]^{\tau_*}\\
H_*(\Gamma^\prime ; \cF_2 \circ \tilde {f})  \ar[rr]^{f_{2*}}& & H_*(\Gamma ; \cF_2)
}\]

\section{The homology groups $\cH_*(\Gamma, A, M)$}

The category $\bdirgr$ has as objects finite digraphs that are
equipped with a preferred {\em base vertex}. Morphisms are inclusions
that take base vertex to base vertex. There is a forgetful functor
$\bdirgr \ra \dirgr$ and we can take homology by first applying this
functor and then proceeding as in the previous section. 

Our task in this section is to construct a coefficient system $\cF_{A,M} \colon
\pg \ra \rmod$, given a based digraph $\Gamma$, a (possibly non-commutative) unital $R$-algebra $A$ and  
an $\AAA$ bimodule $M$. Once achieved the main definition of the paper will be
$$
\cH_*(\Gamma, A,M) = H_*(\Gamma, \cF_{A,M}).
$$ 

We will take the tensor product of modules over
unordered sets so we recall here what this means. Let $S$ be a finite
set and suppose we have a family of $R$-modules indexed by $S$, that
is for each $\alpha\in S$ we have an $R$-module $M_\alpha$. The {\em unordered
tensor product} of this family, denoted 
$$
\bigotimes_{\alpha\in S} M_\alpha
$$ 
is formed by considering all possible orderings of the set $S$,
taking the direct sum of the ordered tensor product for each and then
identifying these via the obvious canonical isomorphisms induced from
permutations.


We now proceed with the construction of $\cF_{A,M}\colon \pg \ra \rmod
$. For $x\in \pg$ let $\Gamma_x$ be the directed graph with the same
vertex set as $\Gamma$ and with edge set consisting of the edges in
the multipath $x$ (along with their directions). We will write
$\pi_0(\Gamma_x)$ for the set of connected components of $\Gamma_x$.
We note that if $x\prec y$ then $\Gamma_y$ contains all the edges of
$\Gamma_x$ with one addition. Since all paths are simple this
additional edge clearly joins two separate components of
$\Gamma_x$. There is evidently a canonical identification of the
components of $\Gamma_x$ and $\Gamma_y$ not involved in this fusion.

For $x\in \pg$ consider the following family of $R$-modules $\{M_\alpha \}$
indexed by the set $\pi_0(\Gamma_x)$. If the component indexed by
$\alpha$ contains the base vertex then $M_\alpha=M$ otherwise
$M_\alpha=A$. Now we define $\cF_{A,M}(x)$ to be the unordered tensor product
$$
\cF_{A,M} (x) = \bigotimes_{\alpha \in \pi_0(\Gamma_x)} M_\alpha.
$$ 

To define the homomorphisms $\cF_{A,M}(x\leq y)$ we first consider
what happens in the case $x\prec y$. Here $\Gamma_y$ consists of
$\Gamma_x$ with an additional edge $e$ and as noted above two distinct
components in $\Gamma_x$ become one in $\Gamma_y$. The idea is to
define a homomorphisms using the canonical identification between
components away from those that fuse, and multiplication or the
actions of $A$ on $M$ for those that fuse. The key point is that the
order of multiplication is determined by the head and tail of $e$.

For temporary purposes let $I$ be an ordering of $\pi_0(\Gamma_x)$ and
$J$ be an ordering of $\pi_0(\Gamma_y)$. With respect to these
orderings suppose the two components of $\Gamma_x$ that fuse are
indexed by $i$ and $i^\prime$ and the new fused component in $\Gamma_y$ is
indexed by $j$. We now define a homomorphism (here we use the ordered
tensor product)

$$ 
\mu\colon M_{i} \otimes M_{i^\prime} \longrightarrow M_{j} 
$$ 
by 
$$
\mu(a\otimes b) = \begin{cases} 
	ab & \text{ if $i$ indexes the component containing $t(e)$}\\ 
	ba & \text{ if $i$ indexes the component containing $h(e)$} 
		\end{cases} 
$$ 
Here the expression $ab$ has several
possible meanings: if the base vertex is not involved in the fusion of
components then it means the multiplication in the algebra $A$; if
$M_i=M$ (i.e. $i$ indexes the component containing the base vertex)
then $M_j=A$ and $M_k=M$ and $ab$ means the right action of $A$ on
$M$; if $M_j=M$ (i.e. $j$ indexes the component containing the base
vertex) then $M_i=A$ and $M_k=M$ and $ab$ means the left action of $A$
on $M$. One similarly interprets $ba$. By combining this map with the
canonical permutation identification on the remaining tensor factors
this gives a homomorphism of ordered tensor products $\bigotimes_I M_i
\ra \bigotimes_J M_j$.

\begin{lemma}
The above defines a homomorphism 
$$
\cF_{A,M}(x\prec y) \colon \cF_{A,M}(x) \ra \cF_{A,M}(y).
$$
\end{lemma}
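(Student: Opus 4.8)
The plan is to unwind the definition of the unordered tensor product and to check that the construction given does not depend on the auxiliary orderings $I$ of $\pi_0(\Gamma_x)$ and $J$ of $\pi_0(\Gamma_y)$ chosen to write it down. Recall that $\bigotimes_{\alpha\in S}M_\alpha$ is, by definition, the direct sum of the ordered tensor products over all orderings of $S$, with the summands identified via the canonical permutation isomorphisms of a symmetric monoidal category. Thus a homomorphism $\cF_{A,M}(x)\to\cF_{A,M}(y)$ is the same datum as a homomorphism of ordered tensor products $\bigotimes_I M_i\to\bigotimes_J M_j$, for one pair of orderings, that is compatible with every change of ordering on the source and on the target; equivalently, for any two pairs of orderings the square built from the two ordered maps and the two vertical permutation isomorphisms must commute.

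First I would reduce to adjacent transpositions: since every permutation is a product of adjacent transpositions, it suffices to analyse how the ordered map changes when we swap two neighbouring factors on the source, respectively on the target. The ordered map is a tensor product of (a) identity maps on the factors indexed by components of $\Gamma_x$ not involved in the fusion, transported along the canonical bijection of these with the corresponding components of $\Gamma_y$, and (b) the map $\mu$ on the block formed by the two fusing factors. For adjacent transpositions not involving a fusing factor, compatibility on source and target is exactly naturality of the symmetry isomorphism, i.e. Mac Lane coherence for symmetric monoidal categories; the same observation lets us move the two fusing factors into adjacent position without affecting the map. So the question collapses to a single case.

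The remaining case, and the only genuinely new point, is the transposition of the two fusing factors themselves. Writing $C$ for the component of $\Gamma_x$ containing $t(e)$ and $D$ for the one containing $h(e)$, I must check that the square with top edge $\mu\colon M_C\otimes M_D\to M_j$, left edge the transposition $M_C\otimes M_D\cong M_D\otimes M_C$, right edge the identity of $M_j$, and bottom edge $\mu'\colon M_D\otimes M_C\to M_j$ commutes. This is immediate from the two-case definition of $\mu$: with $a\in M_C$ and $b\in M_D$, the first case gives $\mu(a\otimes b)=ab$ (the tail factor acting on the left), while, since now the component containing $h(e)$ occupies the first slot, the second case gives $\mu'(b\otimes a)=ab$ as well; the three interpretations of the symbol $ab$ (the product of $A$, the right action of $A$ on $M$ when the base vertex lies in $C$, or the left action when it lies in $D$) are handled uniformly. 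Hence the two ordered maps agree after the permutation identifications, and the construction descends to a well-defined homomorphism $\cF_{A,M}(x\prec y)$.

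I expect the main obstacle to be purely organisational: setting up the reduction to adjacent transpositions cleanly and keeping track of the canonical bijection between the non-fusing components of $\Gamma_x$ and $\Gamma_y$, so that the coherence argument really does reduce everything to the single transposition of the two fusing factors. Once that bookkeeping is in place the essential verification is the one-line computation above, and it works precisely because the head/tail prescription in the definition of $\mu$ was designed to be symmetric under exchanging the two slots.
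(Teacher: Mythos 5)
Your proposal is correct and is essentially the paper's argument: the paper's proof likewise observes that $\mu$ is defined in terms of the components containing $t(e)$ and $h(e)$ rather than their positions in the ordering, so the ordered maps commute with the permutation identifications. Your version merely spells out the bookkeeping (reduction to adjacent transpositions and the explicit check for the swap of the two fusing factors) that the paper compresses into a single commutative triangle.
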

\begin{proof}
Let $\sigma$ be a permutation taking an ordering $I$ to another $I^\prime$. In the above construction the maps $\mu$ depend on the tensor factors corresponding to $t(e)$ and $h(e)$ not on the factors position in any ordering. It follows that there is a commutative diagram
$$
\xymatrix{
\bigotimes_I M_i \ar[r]^\mu \ar[d]_{\sigma} & \bigotimes_J M_j\\
\bigotimes_{I^\prime} M_{i^\prime} \ar[ru]^\mu
}
$$
and so the maps $\mu$ are compatible with the symmetric group action.
\qed
\end{proof}

\begin{lemma}\label{lem:seqlengthtwo}
If $x\prec y \prec z$ and $x\prec y^\prime \prec z$ then
$$
\cF_{A,M}( y \prec z) \circ  \cF_{A,M}( x \prec y) = \cF_{A,M}( y^\prime \prec z) \circ \cF_{A,M}( x \prec y^\prime).
$$
\end{lemma}
\begin{proof}
By Lemma \ref{lem:diamond} (ii) we know that $y$ and $y^\prime$ are
the only two elements lying between $x$ and $z$ in this way. Suppose
$\Gamma_y=\Gamma_x \cup \{e\}$ and $\Gamma_{y^\prime}=\Gamma_x \cup \{e^\prime\}$. If $e$ and $e^\prime$ are not both contained in a single simple path of $\Gamma_z$ then the result is clear. If they are both contained in the same simple path of $\Gamma_z$ then without loss of generality we may suppose that $e$ comes before $e^\prime$. Now choose an ordering on the components of $\Gamma_x$ so that the simple path before $e$ is labelled 1, the simple path between $e$ and $e^\prime$ is labelled 2 and the simple path after $e^\prime$ is labelled 3 (see Figure \ref{fig:paths}). 

\begin{figure}
\begin{center}
\includegraphics[angle=-90, scale = 0.4]{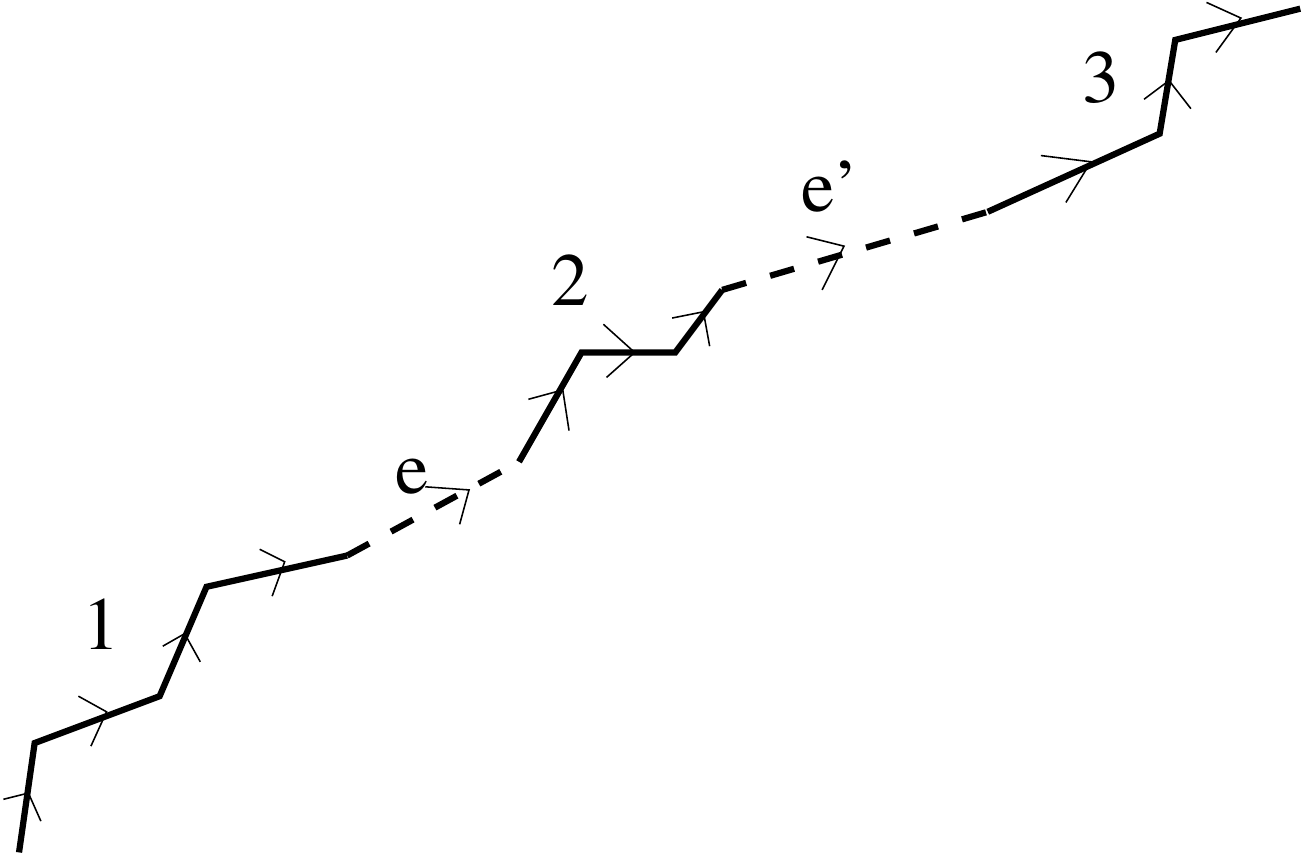}
\end{center}
\caption{Path ordering}\label{fig:paths}
\end{figure}

Now suppose we have ordering of the components of $\Gamma_y$ 
such that $1$ indexes the component of $\Gamma_y$ containing $e$ and 2
indexes the component at the head of $e^\prime$. Similarly, suppose we
have ordering of the components of $\Gamma_{y^\prime}$ such that $1$
indexes the component at the tail of $e$ and 2 indexes the component
containing $e^\prime$. In $\Gamma_z$ the simple path containing $e$
and $e^\prime$ is indexed by $1$. Then by the associativity of $\mu$
the following diagram commutes
$$
\xymatrix{
M_1\otimes M_2 \otimes M_3 \ar[r]^{\mu \otimes 1} \ar[d]^{1 \otimes \mu }&
M_1\otimes M_2  \ar[d]^{\mu } \\
M_1\otimes M_2  \ar[r]^{\mu } & M_1
}
$$ 
from which the result easily follows.
\qed
\end{proof}

We now extend this to define a map $\cF_{A,M}(x\leq y)\colon \cF_{A,M}(x) \ra \cF_{A,M}(y)$ for any $x\leq y$. Pick a sequence $x\prec x_1 \prec \cdots \prec x_l \prec y$ and set
$$
\cF_{A,M}(x\leq y) = \cF_{A,M}( x_l \prec y) \circ \cdots \circ \cF_{A,M}( x \prec x_1).
$$
Courtesy of Lemma \ref{lem:seqlengthtwo} we immediately see that this does not depend on the particular choice of sequence. We have thus shown that 

\begin{proposition}
$\cF_{A,M}\colon \pg \ra \rmod $ as defined above is a covariant functor,
i.e. $\cF_{A,M}$ is a coefficient system for $\Gamma$.
\end{proposition}

We finally arrive at the principal definition of this section. 

\begin{definition}
Let $A$ be a unital $R$-algebra, $M$ an $\AAA$ bimodule and $\Gamma$ a
finite based digraph. Using the coefficient system $\cF_{A,M}$ above we
define
 $$
\cH_*(\Gamma, A,M) = H_*(\Gamma, \cF_{A,M}).
$$ 
\end{definition}

The following theorem provides one possible answer to Przytycki's question.

\begin{theorem}\label{thm:main}
Let $A$ be an $R$-algebra  and $M$ an $\AAA$ bimodule. Then
$$
\cH_*(- ,A,M) \colon \bdirgr \ra \grmod 
$$
is a functor with the property that if  $\Gamma$ is a consistently directed $n$-gon then for $0\leq i \leq n-2$ 
$$
\cH_i(\Gamma, A, M ) \cong \hh_i (A;M)
$$
where on the right hand side we have the Hochschild homology of the algebra $A$ with coefficients in the bimodule $M$.
\end{theorem}

Before proving this Theorem let us recall Przytycki's result relating
the chromatic homology of graphs to Hochschild homology
\cite{Pr}. We will state the results using homological grading
conventions. Firstly recall that (homologically graded) chromatic
homology of a graph $G$ is defined as follows. Let $A$ be a {\em
commutative} $R$-algebra. Let $\bB$ be the Boolean lattice (the
``cube'' as it is usually referred to) on the edges of
$G$. An element of $\bB$ is a subgraph of $G$ with the same
vertex set as $V$ and will typically contain some isolated
vertices. To each such subgraph $x$ associate the module $\cF(x)$
being a tensor product of copies of $A$, one for each connected
component. To each edge $\zeta$ of the cube (covering relation in
the Boolean lattice) associate a map $d_\zeta$ being given by the algebra
multiplication if two connected components fuse, or the identity map otherwise.
 Letting $N$ be the number of edges in $G$, one defines, for $i=0,1, \ldots , N$
$$
\cC_i (\Gamma) = \bigoplus_{x\in \pg, \gr x = N-i} \cF (x).  
$$
A differential  $d\colon \cC_i( \Gamma) \ra \cC_{i-1}( \Gamma)$ can be defined for $a\in \cF(x)$ by 
$$
d(a) = \bigoplus_{\zeta} \epsilon(\zeta) d_\zeta (a),
$$
where $\epsilon(\zeta)=\pm 1$. This gives a complex, whose homology is the chromatic graph homology of $G$ using the algebra $A$. All this is well documented elsewhere (see \cite{HeGuRo,HeGuPrRo} for details).

Przytycki extends the above in the following way (see \cite{Pr} for details). Suppose $M$ is an
$\AAA$-bimodule (where as above $A$ is commutative) and suppose $v_1$ is a chosen 
 base vertex of $G$. Modify the above construction by replacing $A$ by $M$
whenever associating a module to a component containing
$v_1$. Moreover, in the definition of the differential, partial
derivatives between subgraphs having the same number of components are
set to zero. Denote the result $\hat{H}_*^{A,M}(G)$.

If we take $G=P_n$, an $n$-sided polygon, then taking coherent
directions on each edge (so that the whole polygon is oriented
clockwise or anti-clockwise) then Przytycki argues the above may be
extended to the case where $A$ is non-commutative. (From our point of
view, the existence of this global orientation, means that each
subgraph is again consistently directed and is thus a multipath in our
set up).

Przytycki's main result (with homological grading conventions) is:

\vspace{3mm}

\noindent
{\bf Theorem}(Przytycki)
$$
\hat{H}_i^{A,M}(P_n) \cong \hh_{i-1} (A;M)\;\;\;\;\;\;\;\; \text{for $1\leq i \leq n-1$}.
$$

Thus, modified chromatic homology of polygons agrees with Hochschild homology through a range of dimensions. We are now ready to prove Theorem \ref{thm:main} above.
 

\begin{proof} (of Theorem \ref{thm:main})
Let $f\colon \Gamma^\prime \ra \Gamma$ be a (basepoint preserving) inclusion. We wish to define a chain map
$$
f^\prime\colon \cS_*(\Gamma^\prime, \cF^\prime_{A,M})\ra \cS_*(\Gamma, \cF_{A,M})
$$
Here we are writing $\cF^\prime_{A,M}$ for the coefficient system constructed above for the graph $\Gamma^\prime$.

Firstly, we construct a morphism of coefficient systems $\tau\colon \cF^\prime_{A,M}\natr \cF_{A,M}\circ \tilde{f}$. For $x\in
P(\Gamma)$ the graph $\Gamma_{\tilde{f}(x)}$ is isomorphic
to the graph $\Gamma^\prime_x \cup W $ where $W$  consists of the
vertices in $\Gamma$ which are not in (the image of) $\Gamma^\prime$. We can thus make the identification
$$
\cF_{A,M}(\tilde{f}(x)) \cong  \cF^\prime_{A,M}(x) \otimes \bigotimes_{W}A.
$$
Moreover, if $x\prec y$ in $\Gamma^\prime$  then these identifications make the following diagram commute.
\begin{equation*}
\xymatrix{\cF_{A,M}(\tilde{f}(x)) \;\;\;\;\;\; \cong  \ar[d]_{\cF_{A,M}\circ \tilde{f}(x\prec y)}& \cF^\prime_{A,M}(x) \otimes \bigotimes A \ar[d]^{\cF^\prime_{A,M}(x\prec y)} \\
\cF_{A,M}(\tilde{f}(y)) \;\;\;\;\;\; \cong  & \cF^\prime_{A,M}(y) \otimes \bigotimes A
}
\end{equation*}

We now define $\tau_x$ to be the composition
$$
\cF^\prime_{A,M}(x) \cong 
\cF^\prime_{A,M}(x) \otimes_{R} \bigotimes R \ra
\cF^\prime_{A,M}(x) \otimes_{R} \bigotimes A \cong
\cF_{A,M}(\tilde{f}(x))
$$
where the map shown is just the identity on $\cF^\prime_{A,M}(x) $ and the unit map of the algebra $R\ra A$ on the remaining tensor factors. 

For $x\prec y$ in $\Gamma^\prime$, the diagram above shows that
$$
\xymatrix{\cF^\prime_{A,M}(x)  \ar[d]_{\cF^\prime_{A,M}(x\prec y)}  \ar[r]^(.4){\tau_x} & \cF^\prime_{A,M}(x) \otimes \bigotimes A \ar[d]^{\cF^\prime_{A,M}(x\prec y)\otimes Id} \\
\cF^\prime_{A,M}(y)  \ar[r]_(.4){\tau_y} & \cF^\prime_{A,M}(y) \otimes \bigotimes A
}
$$
Now suppose $x\leq y$ in $\Gamma^\prime$. 
We may choose a sequence $x\prec x_1 \prec \cdots \prec x_m \prec y$ and noting that $\tilde{f}(x)\prec \tilde{f}(x_1) \prec \cdots \prec \tilde{f}(x_m) \prec \tilde{f}(y)$ we see $\tau$ is natural by repeated use of the  above diagram. Thus it is thus a morphism of coefficient systems as desired.

From Proposition \ref{prop:nat} (i) we now get a homomorphism
$$
\tau_* \colon H_*(\Gamma^\prime , \cF^\prime_{A,M}) \ra H_*(\Gamma^\prime , \cF_{A,M}\circ \tilde{f}).
$$

Invoking part (ii) of  Proposition \ref{prop:nat} we also have a homomorphism
$$
f_* \colon H_*(\Gamma^\prime , \cF_{A,M}\circ \tilde{f}) \ra H_*(\Gamma , \cF_{A,M}).
$$

The composition $f_*\circ \tau_*$ gives a homomorphism
$$
f_\bullet \colon \cH_*(\Gamma^\prime, A, M) \ra \cH_*(\Gamma, A, M)
$$
which is the map in homology induced by $f$.

It remains to show that if $g\colon \Gamma^{\prime\prime} \ra
\Gamma^\prime$ is an inclusion then $(fg)_\bullet = f_\bullet \circ
g_\bullet$. This amounts to showing that the top and bottom routes around the
following diagram are the same (where we have omitted the subscripts $A,M$ and are
being a little liberal in our multiple uses of the letter $\tau$).

{\small
$$
\xymatrix{
 & H_*(\Gamma^{\prime\prime}, \cF^{\prime}\circ \tilde{g}) \ar[r]^{g_*} \ar[rdd]_{\tilde{\tau}_*} &
H_*(\Gamma^{\prime}, \cF^{\prime}) \ar[r]^{\tau_*} &
H_*(\Gamma^{\prime}, \cF\circ \tilde{f}) \ar[rd]^{f_*}& \\
H_*(\Gamma^{\prime\prime}, \cF^{\prime\prime}) \ar[ru]^{\tau_*} \ar[rrd]_{\tau_*}& 
&&&
H_*(\Gamma, \cF)\\
&& H_*(\Gamma^{\prime\prime}, \cF\circ \widetilde{fg}) \ar[uur]_{g_*} \ar[urr]_{(fg)_*} &&
}
$$
}

The left-hand triangle commutes directly from the definition of the maps $\tau$, and the right-hand triangle commutes from the statement in Proposition \ref{prop:nat} (ii) that the induced maps behave well under composition. The middle square commutes by the comments immediately after the proof of Proposition \ref{prop:nat}.

In order to make the connection with Hochschild homology we combine
Przytycki's result with the work of Everitt and the first author
\cite{EvTu}. It is clear that if $\Gamma= P_n$ with consistent
directions, then $\pg$ is the Boolean lattice on the edges of $P_n$  minus its
maximum element.

Moreover the functor $\cF_{A,M}$ constructed on $\pg$ agrees with the
(implicit) functor used in the construction of $\hat{H}_{i+1}^{A,M}$ in this case. Since the
graph is a polygon, the only partial derivative in Przytycki's set-up
that are set to zero are those from corank $1$ elements to the maximal
element and this corresponds to the absence of the maximum element of
the Boolean lattice in $\pg$.

It now follows from the main result of \cite{EvTu} that the
homology of the path category of category $P_n$ with coefficients in $\cF_{A,M}$ is
isomorphic to $ \hat{H}_{i+1}^{A,M}(P_n)$ with a grading shift:
$$
\cH_i(P_n, A,M) \cong H_i(P_n , \cF_{A,m}) \cong \hat{H}_{i+1}^{A,M}(P_n).
$$
Combining this with Przytycki's result gives the desired isomorphism.
\qed
\end{proof}

When $M=A$ the base vertex become irrelevant and we may define for an
(unbased) digraph $\Gamma$ the homology groups $\cH_*(\Gamma, A) =
\cH_*(\Gamma, A,A)$, where on the right-hand side any base vertex for
$\Gamma$ will do. Letting $\ralg$ denote the category of $R$-algebras we have:

\begin{theorem}\label{thm:restricted}
$
\cH_*(- , -) \colon \dirgr \times \ralg  \ra \grmod 
$
is a  bifunctor.  
\end{theorem}

\begin{proof}
Functoriality in the first variable follows from the previous theorem. 
For the second variable, let $f\colon A \ra B$ be an algebra homomorphism. For $x\in \pg$ we have 
$$\cF_{A,A}(x) = \bigotimes A  \;\;\;\; \text{   and    } \;\;\;\; \cF_{B,B}(x) = \bigotimes B
$$ where the tensor product is over the same indexing set in both
 cases. We can therefore define a homomorphism $\sigma_x\colon
 \cF_{A,A}(x) \ra \cF_{B,B}(x)$ by $\sigma_x=\bigotimes f$. Since $f$
 is a homomorphism of algebras, the $\sigma_x$ define a natural
 transformation $\sigma \colon \cF_{A,A}
 \stackrel{\bullet}{\longrightarrow} \cF_{B,B}$. By Proposition
 \ref{prop:nat} (i) this induces a map $H_*(\Gamma, \cF_{A,A} ) \ra
 H_*(\Gamma, \cF_{B,B} )$ as required. Composition of algebra
 homomorphisms is easily seen to give a well defined composition of
 these induced maps.  \qed
\end{proof}

If one fixes the directed graph $\Gamma$ the above gives a functor
$$
\cH^{\Gamma}_*(-)\colon \ralg  \ra \grmod .
$$ 
One is tempted to call $\cH^{\Gamma}_*(A)$ the {\em homology of the
algebra $A$ with coefficients in the digraph $\Gamma$}. Such homology
theories of algebras are probably worthy of study in their own
right. We limit ourselves here to the observation that if $\gamma$ is
an oriented cycle of length $n$ in $\Gamma$ then there is an inclusion
of digraphs $\gamma \ra \Gamma$ which by functoriality and Theorem \ref{thm:main} gives a map
$$
\gamma_*\colon \hh_{i}(A) \ra \cH^{\Gamma}_*(A)
$$ 
for $i=0,1,\ldots , n-2$.

\section*{References}

\begin{biblist}

\bib{EvTu}{article}{
   author={Everitt, Brent},
   author={Turner, Paul},
   title={Homology of coloured posets: a generalization of Khovanov's cube construction},
journal={J. Alg.},
   volume={322},
   date={2009},
   pages={429--448}
}

\bib{HeGuPrRo}{article}{
   author={Helme-Guizon, Laure},
author={Przytycki, Jozef},
   author={Rong, Yongwu},
   title={Torsion in graph homology},
   journal={Fund. Math.},
   volume={190},
   date={2006},
   pages={139--177}
}

\bib{HeGuRo}{article}{
   author={Helme-Guizon, Laure},
   author={Rong, Yongwu},
   title={A categorification for the chromatic polynomial},
   journal={Algebr. Geom. Topol.},
   volume={5},
   date={2005},
   pages={1365--1388}
}

\bib{Pr}{article}{
author={Przytycki, Jozef},
   title={When the theories meet: Khovanov homology as Hochschild homology of links},
   journal={Quantum Topology},
   date={to appear},
eprint={arXiv:0509334 [math.GT]}
}

\end{biblist}

\end{document}